\newcommand{\mystrut}{\vrule height9.5pt depth1.5pt width0pt}
\newcommand{\tab}{\par\noindent\mystrut}
\newcommand{\tabb}{\tab\hskip1.5em}
\newcommand{\tabbb}{\tab\hskip3.5em}
\newcommand{\END}{\textbf{end}}
\newcommand{\FOR}{\textbf{for}}
\newcommand{\ELSE}{\hskip2pt\textbf{else}\hskip2pt}
\newcommand{\IF}{\textbf{if}\hskip2pt}
\def\Re{\mathbb{R}}
\newcommand{\defined}{\mathop{\,{\scriptstyle\stackrel{\triangle}{=}}}\,}
\newcommand{\minimize}[1]{{\displaystyle\minim_{#1}}}
\newcommand{\minim}{\mathop{\operator@font{minimize}}}
\newcommand{\bgap}{\;\;\;}
\newcommand{\mgap}{\;\;}
\newcommand{\PCG}{{\small PCG}}
\newcommand{\CG}{{\small CG}}
\newcommand{\LBFGS}{{\small L-BFGS}}
\newcommand{\MATLAB}{{\small MATLAB}}
\newcommand{\BFGS}{{\small BFGS}}
\newcommand{\etal}{et al.}  
\newcommand{\CUTEr}{{\small CUTE}r}
\newcommand{\Qscr}{{\mathcal Q}}
\newcommand{\subject}{\mathop{\operator@font{subject\ to}}} 
\begin{document}
 \newtheorem{theorem}{Theorem}
\newtheorem{corollary}{Corollary}
\newtheorem{lemma}{Lemma}
\markboth{Erway, Jain, and Marcia}{Shifted {L-BFGS} Systems}


\title{Shifted L-BFGS Systems}

\author{Jennifer B. Erway}
\email{erwayjb@wfu.edu}
\address{Department of Mathematics, Wake Forest University, Winston-Salem, NC 27109}

\author{Vibhor Jain}
\email{vjain@ucmerced.edu}
\address{Appied Mathematics, University of California, Merced, Merced, CA 95343}

\author{Roummel F.\ Marcia}
\email{rmarcia@ucmerced.edu}
\address{Appied Mathematics, University of California, Merced, Merced, CA 95343}

\date{\today}

\keywords{numerical linear algebra, quasi-Newton methods,
  Sherman-Morrison-Woodbury formula, limited-memory BFGS}

\thanks{This work was supported
in part by NSF grants DMS-08-11106 and DMS-09-65711.}

\maketitle

\begin{abstract}
  We investigate fast direct methods for solving systems of the form
  $(B+G)x=y$, where $B$ is a limited-memory BFGS matrix and $G$ is a
  symmetric positive-definite matrix.  These systems, which we refer to as
  shifted L-BFGS systems, arise in several settings, including
  trust-region methods and preconditioning techniques for interior-point
  methods.  We show that under mild assumptions, the system $(B+G)x=y$
  can be solved in an efficient and stable manner via a recursion that
  requies only vector inner products.  We consider various shift matrices
  $G$ and demonstrate the effectiveness of the recursion methods in
  numerical experiments.

\end{abstract}


\section{Introduction}
This paper proposes  a recursion formula for solving symmetric
positive-definite shifted limited-memory \BFGS{} (\LBFGS) systems of
equations, i.e., equations of the form
\begin{equation}\label{eqn-main}
(B_k+G)x=y,
\end{equation}
where $B_k$ is a \LBFGS{} matrix and $G$ is a symmetric positive-definite
matrix such that (i) the smallest eigenvalue of $G$ is bounded away
from zero, and (ii) solves with $G+\alpha I$, where $\alpha > 0$, are
efficient and stable.

Systems of the form (\ref{eqn-main}) arise in both constrained and
unconstrained optimization.  In trust-region methods for minimizing
a twice-continuously differentiable function $f$,
the $j$th two-norm trust-region subproblem is given by
\begin{equation} \label{eqn-trustProblem}
     \minimize{s\in\Re^n}\mgap\Qscr (s) \equiv g_j^Ts + \frac{1}{2} s^TH_j s
\bgap\subject            \mgap  \|s\|_2 \le \delta_j,
\end{equation}
where $g_j\defined \nabla f(x_j)$, $H_j \defined \nabla^2 f(x_j)$, and $\delta_j$
is the trust-region radius. \LBFGS{} quasi-Newton trust-region methods
approximate $H_j$ with an \LBFGS{} quasi-Newton matrix $B_j$ (e.g., 
~\cite{Pow70c, Pow70d, DenM77, ByrNS94, Kauf99, Ger04}).  
In this context, $s^*$ is a global solution to the trust-region subproblem
if and only if $\|s^*\|_2\leq \delta_j$ and there exists a unique
$\sigma^*\ge 0$ such that
\begin{equation}                              \label{eqnUC-TR-optimality}
  (B_j+\sigma^* I)s^* = - g,  \quad \text{and} \quad
   \sigma^*(\delta_j - \|s^* \|_2)=0.
\end{equation}
Since $B_j$ is symmetric positive-definite, the system matrix in
(\ref{eqnUC-TR-optimality}) is symmetric positive-definite, and thus, the
matrix-vector equation is an example of a shifted \LBFGS{} system of the
form (\ref{eqn-main}).  In small-scale optimization, trust-region methods
use matrix factorizations to find a pair $(s^*,\sigma^*)$ that satisfy
(\ref{eqnUC-TR-optimality}); in particular, the Mor\'{e}-Sorenson direct
method, arguably the best direct solver, makes use of Cholesky
factorizations of the shifted (approximate) Hessian to find a global
solution of the trust-region subproblem~\cite{MorS83}.  Being able to
efficiently solve shifted \LBFGS{} systems enables the use of direct
methods such as the Mor\'{e}-Sorensen direct method for large-scale
unconstrained optimization.

In constrained optimization, shifted \LBFGS{} systems result when
preconditioning primal-dual penalty and interior-point methods; more
generally, these systems can arise in the context of KKT systems or
saddle-point systems.  For example, consider the following system of
equations
\begin{equation}\label{eqn-kkt}
\begin{pmatrix}
H &\ -A^T \\ A & \ D
\end{pmatrix}
\begin{pmatrix}
x_1 \\ x_2
\end{pmatrix}
= \begin{pmatrix}
b_1 \\
b_2
\end{pmatrix},
\end{equation}
where $A$ is an $m\times n$ matrix, H is symmetric, and $D$ is symmetric
positive definite.  Systems of this form are often called ``KKT systems''
or ``saddle-point systems''.  The equivalent \emph{doubly-augmented
  system}~\cite{ForGG07} is 
\begin{equation}\label{eqn-doubly}
\begin{pmatrix}
H+2A^TD^{-1}A & \ A^T \\ A & \ D
\end{pmatrix}
\begin{pmatrix}
x_1 \\ x_2
\end{pmatrix}
= \begin{pmatrix}
b_1 + 2A^TD^{-1}b_2 \\
b_2
\end{pmatrix},
\end{equation}
and arises in the Newton equations for primal-dual augmented Lagrangian
methods (see, e.g.,~\cite{ForG98, ErwGG09, GilR10}).  Systems of the form
(\ref{eqn-doubly}) also arise in the Newton equations associated with
primal-dual interior-point methods (see, e.g.,~\cite{ForG98, ForGW02,
  ForGG07, ErwGG09}).  In both applications, typically $H$ is the Hessian
of the Lagrangian, $A$ is the constraint Jacobian, and $D$ is a
positive-definite diagonal matrix, which serves as a regularization
(\cite{ErwGG09, GilR10}).

If the matrix $H+2A^TD^{-1}A$ is positive definite, the system matrix in
(\ref{eqn-doubly}) is positive definite~\cite{ForGG07}; thus, preconditioned
conjugate-gradients (\PCG) may be used to solve (\ref{eqn-doubly}).
Forsgren \etal~\cite{ForGG07} recommend a block preconditioner of the form
$$
P=
\begin{pmatrix}
B+2A^TD^{-1}A & \ A^T \\ A & \ D
\end{pmatrix},
$$
where $B$ is an approximation of $H$ such that
$B+2A^TD^{-1}A$ is positive definite.  One benefit of this preconditioner
is that efficient solves with $P$ can be computed provided solves
with $B+2A^TD^{-1}A$ are efficient.  To see this, note that
\begin{equation}\label{eqn-interiorP}
\begin{pmatrix}
B+2A^TD^{-1}A & \ A^T \\ A & \ D
\end{pmatrix}
\begin{pmatrix}
v_1 \\ v_2
\end{pmatrix} =
\begin{pmatrix}
r_1 \\ r_2
\end{pmatrix}
\end{equation}
is equivalent to first solving $(B+A^TD^{-1}A)v_1 = r_1-A^TD^{-1}r_2,$ for
$v_1$ and then directly computing $v_2 = D^{-1}(r_2-Av_1)$. In the case
when $A$ is a constant positive-definite diagonal matrix (e.g., the
constraints are simple bounds), solves with $P$ are efficient whenever
solves with $B+G$ are efficient, where $G$ is a symmetric positive-definite
diagonal matrix.  In large-scale optimization, \LBFGS{} matrices are a
common choice for approximating matrices of unknown structure.  If $B$ is
taken to be an \LBFGS{} approximation to $H$, then the resulting system to
be solved is a shifted \LBFGS{} system.

\bigskip

In this paper we investigate fast direct methods for solving shifted
\LBFGS{} systems where the shift $G$ is a symmetric positive-definite
matrix.  Recent work has developed formulas for the case when $G$ is a
diagonal matrix~\cite{ErwayMarcia12, ErwayMarciaWCE12}; however, no
stability proof was given for the proposed recursion formulas.  In this
paper, we derive recursion formulas for the cases when $G$ is sufficiently
positive-definite (i.e., the smallest eigenvalue of $G$ is bounded below)
and solves with $G+\alpha I$, where $\alpha$ is a constant, are efficient
and stable.  An important contribution of this paper is a stability proof for
the proposed recursion formulas that includes the case when $G$ is a
positive diagonal matrix.

This paper is organized in six sections.  Section 2 is a review of
\LBFGS{} updates, including the famous two-term recursion
formula~\cite{Noc80} for solves with the \LBFGS{} matrix.  Section 3
introduces shifted \LBFGS{} systems and reviews the recursion formula
for shifted \L-BFGS{} systems.  In Section 4, we present stability
results.  We demonstrate the effectiveness of the recursion methods in
Section 5.  Future research directions and conclusions are found in
Section 6.


\section{Limited-memory BFGS matrices}
In this section, we review \LBFGS{} matrices and their updates.

\medskip

The \BFGS{} matrix is defined by a sequence of pairs of updates
$\{(s_i,y_i)\}$ as follows:
\begin{equation}\label{eqn-pairs}
	s_i=x_{i+1}-x_i\quad  
	\textrm{and} \quad 
	y_i=\nabla f(x_{i+1})-\nabla f(x_i).
\end{equation}

The initial \BFGS{} matrix is taken to be a scalar multiple of the identity,
i.e., $B_0=\gamma_k^{-1}I$.  For each pair $(s_i,y_i)$, the quasi-Newton
matrix is updated as follows:
\begin{equation}\label{eqn:Bi+1}
B_{i+1} = B_i -   \frac{1}{s_i^TB_is_i}B_i s_is_i^TB_i +
		 \frac{1}{y_i^Ts_i}y_iy_i^T.
\end{equation}
Each \BFGS{} matrix $B_i$ is symmetric positive definite. In practice, the
initial matrix $B_0$ is often taken to be $\gamma_k\equiv
s_{k-1}^Ty_{k-1}/\|y_{k-1}\|_2^2$, where $k$ is the index of the last stored pair
(see, e.g.,~\cite{LiuN89} or~\cite{Noc80}).

\bigskip

In large-scale optimization, it is advantageous to store only a few of the most
recent pairs $\{(s_i,y_i)\}$, i.e., typically less than ten (Byrd
\etal{}~\cite{ByrNS94} recommend between two and six).  In this case, the
\BFGS{} matrix is referred to as a limited-memory \BFGS{} (\LBFGS) matrix.
For this paper, we consider \LBFGS{} matrices where $M$ denotes the maximum
number of stored \LBFGS{} updates.

One advantage of \LBFGS{} updates is that for solving systems of the form
$B_kx=r$ there is a two-loop recursion formula~\cite{Noc80, NocW06}.
\newline \newline
\begin{Pseudocode}{Algorithm 2.1: Two-loop recursion to compute $x=B_k^{-1}r$}\label{alg-recursion}
  \tab $q\leftarrow r$;
  \tab \FOR\ $i=k-1,\dots,0$
  \tabb $\alpha_i\leftarrow(s_i^Tq)/(y_i^Ts_i)$;
  \tabb $q \leftarrow q-\alpha_iy_i$;
  \tab \END\ 
  \tab $x\leftarrow B_0^{-1} q$;
  \tab \FOR\ $i=0,\ldots, k-1$
  \tabb  $\beta \leftarrow (y_i^Tx)/(y_i^Ts_i)$;
  \tabb  $x \leftarrow x+(\alpha_i-\beta)s_i$:
  \tab \END\
\end{Pseudocode}

To solve shifted \LBFGS{} systems, one might be tempted to make a simple
change to Algorithm 2.1, particularly by replacing $x\leftarrow B_0^{-1}q$ with 
$x\leftarrow (B_0+G)^{-1}q$ in between the two loops in Algorithm 2.1.  
However, this does not yield a solution to $(B_k+G)x=r$. It is easiest to see this
by considering the first update.  Let $\widehat{B}_0 = (B_0 + G)$.  Then
the two-loop recursion where the $x\leftarrow B_0^{-1}q$ step is replaced by 
$x\leftarrow \widehat{B}_0^{-1}q$ would yield a solution that satisfies
$x = \widehat{B}_1^{-1}r$, where
\begin{eqnarray*} \nonumber
	\widehat{B}_1 
	&=& \widehat{B}_0 - \frac{1}{s_0^T\widehat{B}_0s_0} \widehat{B}_0 s_0 s_0^T\widehat{B}_0 +
			\frac{1}{y_0^Ts_0} y_0 y_0^T\\ 
	&=&  (B_0 + G) - \frac{1}{s_0^T\widehat{B}_0s_0} \widehat{B}_0 s_0  s_0^T \widehat{B}_0+
			\frac{1}{y_0^Ts_0} y_0 y_0^T \nonumber \\ 
	&=& \left ( B_0 - \frac{1}{s_0^T\widehat{B}_0s_0} \widehat{B}_0 s_0 s_0^T\widehat{B}_0  +
			\frac{1}{y_0^Ts_0} y_0 y_0^T  \right ) + G. \nonumber 
\end{eqnarray*}
Note that $\widehat{B}_1 \ne B_1 + G$.  
Thus, $x$ satisfies $\widehat{B}_1x = r$; however, $x$ does not satisfy
the shifted \LBFGS{} system, i.e.,
$(B_1 + G)x\neq r$.



For the duration of the paper we assume that at most $M$ pairs
$\{(s_i,y_i)\}$, $i=0,\ldots M-1$ are stored at any given time.  Moreover,
we assume the usual requirement that a pair $(s_i,y_i)$, $i=0,\ldots M-1$,
must satisfy $y_i^Ts_i>0$ in order to guarantee that each $B_i$ is positive
definite.  For $k<M$, the $k$th vectors $s_k$ and $y_k$ are stored as the
$k$th column in $S$ and $Y$, respectively.  We use Algorithm 2.2 to update
the matrices $S$ and $Y$ as new pairs $(s_k,y_k)$ are generated; thus, at
all times we have exactly $k$ stored vectors with $k\le M-1$.

\begin{Pseudocode}{Algorithm 2.2: Update $S$ and $Y$}\label{alg-update}
\tab \IF\ $k<M-1$,
\tabb $S\leftarrow [S \,\,\, s_k]; \mgap Y\leftarrow [Y \,\,\, y_k]; \mgap
k\leftarrow k+1$;
\tab \ELSE\ 
\tabb \FOR\ $i=0,\ldots k-1$
\tabbb  $s_i\leftarrow s_{i+1}; \mgap y_i\leftarrow y_{i+1}$; 
\tabb \END\
\tabb $S\leftarrow [s_0,\dots s_{k-1}]; \mgap Y\leftarrow [y_0,\dots y_{k-1}]$; 
\tab \END\
\end{Pseudocode}

\section{Shifted L-BFGS Methods}

Consider the problem of finding the inverse of $B_k+G$, where $B_k$ is an
\LBFGS{} quasi-Newton matrix and $G$ is a symmetric positive-definite
matrix.  We assume that solving systems involving $G+\alpha I$ where $\alpha$
is a scalar can be done in an efficient and stable manner.
In particular, the proposed method is suitable in cases where
$G$ is banded (e.g., diagonal or tridiagonal),
structured (e.g., triadic or circulant), or factorized (e.g., its Cholesky or LDL$^\textrm{T}$ 
decomposition is known).

The Sherman-Morrison-Woodbury (SMW) formula gives the following formula for
computing the inverse of $A+uu^T$, where $A$ is an $n\times n$ symmetric
and invertible matrix and $uu^T$ is a symmetric rank-one update
(see~\cite{GolV96}) with $u\in\mathbb{R}^n$:
\begin{equation*}
(A+uu^T)^{-1} =  A^{-1}-\frac{1}{1+u^TA^{-1}u}A^{-1}uu^T\! A^{-1}.
\end{equation*}

We will now use the SMW formula for computing the inverse of $B_k + G$.
(This discussion closely follows \cite{ErwayMarcia12}.)
First, for $0 \le j < k$, let 
\begin{equation}\label{eqn:u}
	u_{2j} \ = \ 
	\frac{1}{s_j^TB_js_j}B_j s_j \quad
	\text{and}  \quad
	u_{2j+1} \ = \
	 \frac{1}{y_j^Ts_j} y_j
\end{equation}
and let $U_i = (-1)^{i+1}u_i u_i^T$, 
for $0 \le i < 2k$, 
be the rank-one updates in \eqref{eqn:Bi+1}.
Letting $C_0 = (\gamma_k^{-1}I + G)$, we define the matrices
\begin{equation}\label{eqn:Ci}
	C_1 = C_0 + U_0 \ , 
	\quad C_2 = C_1 + U_1 \ ,  \quad C_3 = C_2 + U_2 \ , \dots  \, .
\end{equation}
We compute $(B_k + G)^{-1}z$ by noting that $C_{2k} = B_k + G$ and
applying the SMW formula to $C_{2k}^{-1}z$ recursively: For $0 \le i < 2k$,
\begin{equation}\label{eqn:Ciinv}
	C_{i+1}^{-1} z = C_{i}^{-1}z - 
	\frac{1}{1 + (-1)^{i+1} u_i^TC_i^{-1}u_i} C_{i}^{-1}U_{i}C_{i}^{-1}z,
\end{equation}
and thus, recursively applying \eqref{eqn:Ciinv} to $C_i^{-1}z$, we obtain
\begin{equation}\label{eqn:Ck}
	C_{2k}^{-1}z = C_0^{-1}z
	- \sum_{i=0}^{2k-1}\frac{1}{1 + (-1)^{i+1}u_i^TC_i^{-1}u_i} C_{i}^{-1}U_{i}C_{i}^{-1}z.
\end{equation}

We now show that \eqref{eqn:Ck} can be computed efficiently using only
vector inner products.  Let $$\tau_{i} = \frac{1}{1 +(-1)^{i+1}u_iC_i^{-1}u_i},$$
and let $p_j = C_j^{-1}u_j$.
Then
$
	(C_i^{-1}U_i C_i^{-1})z = (-1)^{i+1}(p_ip_i^T) z = (-1)^{i+1}(p_i^T z) p_i,
$
and \eqref{eqn:Ck} simplifies to 
\begin{equation} \label{eqn-Ck1}
	C_{2k}^{-1}z \ = \ C_0^{-1}z + \sum_{i=0}^{2k-1} (-1)^i \tau_i (p_i^Tz) p_i.
\end{equation} 
We assume that $C_0^{-1}z = (\gamma_k^{-1}I+G)^{-1}z$ is easy to compute.
Therefore, the bulk of the computational effort in forming $C_k^{-1}z$
involves the inner product of $z$ with the vectors $p_i$ for $i = 0, 1,
\dots, k$.  What remains to be shown is how to compute $\tau_i$ and $p_i$
efficiently.

First, 
$\tau_i$ simplifies to
$$
	\tau_i = \frac{1}{1 +  (-1)^{i+1}u_i^Tp_i}.
$$
Next, we can compute $p_i= C_i^{-1}u_i$ by evaluating (\ref{eqn-Ck1})
at $z = u_i$:
$$
	p_i \ = \
	C_0^{-1}u_i +
	\sum_{j = 0}^{i-1}(-1)^j \tau_j(p_j^Tu_i)p_j
$$
Thus, computing and storing $u_i^Tp_j$ enable us to easily compute $\tau_i$ and $p_i$.
Algorithm 3.1 computes
$C_{2k}^{-1}z$ in \eqref{eqn:Ci} using \eqref{eqn:Ck} and \eqref{eqn-Ck1}:
 
\bigskip
\begin{Pseudocode}{Algorithm 3.1: Proposed recursion to compute $x = C_{2k}^{-1}r = (B_k+G)^{-1}r$}\label{alg-recursionC}
$x \leftarrow (\gamma_{k}^{-1}I +
  G)^{-1} r$; \tab \FOR\ $j = 0, \dots, 2k-1$ \tabb \IF\ $j$ even
  \tabbb $u \leftarrow
  \frac{B_{j/2}^{\phantom{T}}s_{j/2}^{\phantom{T}}}
       {\sqrt{s_{j/2}^TB_{j/2}^{\phantom{T}}s_{j/2}^{\phantom{T}}}}$;
       \tabb \ELSE \tabbb $u \leftarrow
       \frac{y_{(j-1)/2}^{\phantom{T}}}{\sqrt{y_{(j-1)/2}^Ts_{(j-1)/2}^{\phantom{T}}}}$;
       \tabb \END\ \tabb $p_j \leftarrow (G+\gamma_{k}^{-1}I)^{-1}u$;
       \tabb \FOR\ $i = 0, \dots, j-1$ \tabbb $p_j \leftarrow p_j +
       (-1)^i\tau_i (p_i^Tu) p_i$; \tabb \END\ \tabb $\tau_j
       \leftarrow 1/(1 + (-1)^{j+1}p_j^Tu)$; \tabb $x_{\phantom{j}}
       \leftarrow x + (-1)^{j+1}\tau_j (p_j^Tx) p_j$; \tab \END\
\end{Pseudocode}
\noindent Note that Algorithm 3.1 requires a total of $2k$ matrix
solves to compute $p_j\leftarrow (G + \gamma_k^{-1}I)^{-1}u$
and requires $2k^2+5k+3$
vector inner products (excluding the definition of $u$ each
iteration).  The $u$ updates in Algorithm 3.1 can be computed
efficiently using 
Procedure 7.6 (Unrolling the BFGS formula) in \cite{NocW06},
which requires a total of $k^2+k$ vector inner products.  
Additionally, Algorithm 3.1 requires $2k^2 - 2k + 1$ vector updates.  
Considering
$k$ is generally between 2 and 6, these total counts are relatively
low.

\section{Stability}

It is well-known that the SMW formula for inverting a rank-one update to a
nonsingular matrix can be numerically unstable (see, e.g.,
\cite{Higham2002,Stewart1974,Yip1986}).
In this section, we address how this potential instability in our proposed
recursion approach is mitigated.  To show that the proposed recursion
approach in computing $C_{i+1}^{-1}z$ in \eqref{eqn:Ci} is stable, we first
show that $1 + (-1)^{i+1}u_i^Tp_i$ is sufficiently bounded away from zero.

The potential source of instability is in the computation of $\tau_i$:
\begin{equation}\label{eqn:taui}
  	 \tau_i = 	\frac{1}{1+ (-1)^{i+1}u_i^Tp_i}.
\end{equation}
When $i$ is odd, there is no instability because the denominator in (\ref{eqn:taui}) 
is bounded away from zero since $C_i$ is positive definite: 
$$
	1 + u_i^TC_i^{-1} u_i > 1.
$$ However, when $i$ is
even, subtraction in the denominator of (\ref{eqn:taui}) could cause
catastrophic cancellation.  To show that the proposed recursion formula is
stable, we prove that the denominator in (\ref{eqn:taui}) is bounded away
from zero.   Let $\theta_{\min}$ be a lower bound on the 
eigenvalues of $G$, i.e., $0 < \theta_{\min} \le \lambda(G)$. 

\begin{lemma}\label{lemma:tracebound}
  Let $Y_j = [ \ y_0 \ \ y_1 \ \cdots \ y_j \ ] \in \mathbb{R}^{n \times
    (j+1)}$.  Suppose that $y_{\ell}^T s_{\ell} \ge \delta > 0$ for $\ell=0,\ldots,k-1$, then for $i = 2j$,
\begin{equation}\label{eqn-4.1}
	1 - u_{i}^TC_i^{-1}u_{i} 
	\ \ge \  
	\frac{\theta_{\min}}{\gamma_k^{-1} + \| Y_{j-1} \|_F^2/\delta + \theta_{\min}}
	\ > \ 0,
\end{equation}
where
$\theta_{\min}$ is a lower bound on the 
eigenvalues of $G$, i.e., $0 < \theta_{\min} \le \lambda(G)$.
\end{lemma}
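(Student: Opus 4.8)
The plan is to first make the matrix $C_{2j}$ explicit. By \eqref{eqn:Bi+1} and \eqref{eqn:u} each consecutive pair of updates telescopes, $U_{2\ell}+U_{2\ell+1} = -\frac{1}{s_\ell^TB_\ell s_\ell}B_\ell s_\ell s_\ell^TB_\ell + \frac{1}{y_\ell^Ts_\ell}y_\ell y_\ell^T = B_{\ell+1}-B_\ell$, so from \eqref{eqn:Ci},
\[
C_{2j} \;=\; C_0 + \sum_{\ell=0}^{j-1}(U_{2\ell}+U_{2\ell+1}) \;=\; (\gamma_k^{-1}I+G) + (B_j-\gamma_k^{-1}I) \;=\; B_j+G .
\]
Since $i=2j$ gives $(-1)^{i+1}=-1$ and, by Algorithm 3.1, $u_i = B_js_j/\sqrt{s_j^TB_js_j}$, the left-hand side of \eqref{eqn-4.1} becomes
\[
1-u_i^TC_i^{-1}u_i \;=\; 1-\frac{s_j^TB_j(B_j+G)^{-1}B_js_j}{s_j^TB_js_j}.
\]

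Next I would simplify this ratio with the symmetric Woodbury identity $(B_j+G)^{-1} = B_j^{-1}-B_j^{-1}(B_j^{-1}+G^{-1})^{-1}B_j^{-1}$, which is valid because $B_j$, $G$, and $B_j+G$ are all positive definite. Multiplying on both sides by $B_js_j$ cancels the outer inverses against the $B_j$ factors, yielding $s_j^TB_j(B_j+G)^{-1}B_js_j = s_j^TB_js_j - s_j^T(B_j^{-1}+G^{-1})^{-1}s_j$ and hence the clean expression
\[
1-u_i^TC_i^{-1}u_i \;=\; \frac{s_j^T(B_j^{-1}+G^{-1})^{-1}s_j}{s_j^TB_js_j}\;>\;0 ,
\]
which already re-derives positivity of the denominator of $\tau_i$ (equivalently, that $C_{2j+1}$ stays positive definite); the remaining task is the quantitative bound.

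Then I would bound the numerator from below. From $\theta_{\min}I\preceq G$ we get $G^{-1}\preceq\theta_{\min}^{-1}I$, so $B_j^{-1}+G^{-1}\preceq B_j^{-1}+\theta_{\min}^{-1}I$ and therefore $(B_j^{-1}+G^{-1})^{-1}\succeq(B_j^{-1}+\theta_{\min}^{-1}I)^{-1}$. Comparing in the eigenbasis of $B_j$, for every eigenvalue $\lambda$ one has $\frac{1}{\lambda^{-1}+\theta_{\min}^{-1}} = \frac{\theta_{\min}}{\theta_{\min}+\lambda}\lambda \ge \frac{\theta_{\min}}{\theta_{\min}+\lambda_{\max}(B_j)}\lambda$, i.e.\ $(B_j^{-1}+\theta_{\min}^{-1}I)^{-1}\succeq\frac{\theta_{\min}}{\theta_{\min}+\lambda_{\max}(B_j)}B_j$, so $1-u_i^TC_i^{-1}u_i\ge\theta_{\min}/(\theta_{\min}+\lambda_{\max}(B_j))$. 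Finally, since the curvature-subtraction terms $-\frac{1}{s_\ell^TB_\ell s_\ell}B_\ell s_\ell s_\ell^TB_\ell$ in \eqref{eqn:Bi+1} are negative semidefinite, $B_j\preceq\gamma_k^{-1}I+\sum_{\ell=0}^{j-1}\frac{1}{y_\ell^Ts_\ell}y_\ell y_\ell^T$, hence $\lambda_{\max}(B_j)\le\gamma_k^{-1}+\trace\!\big(\sum_{\ell=0}^{j-1}\frac{1}{y_\ell^Ts_\ell}y_\ell y_\ell^T\big) = \gamma_k^{-1}+\sum_{\ell=0}^{j-1}\|y_\ell\|_2^2/(y_\ell^Ts_\ell)\le\gamma_k^{-1}+\|Y_{j-1}\|_F^2/\delta$ using $y_\ell^Ts_\ell\ge\delta$. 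Because $t\mapsto\theta_{\min}/(\theta_{\min}+t)$ is decreasing, substituting this upper bound gives exactly \eqref{eqn-4.1}.

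I expect the main obstacle to be the first two steps: spotting the telescoping that collapses $C_{2j}$ to $B_j+G$, and then choosing the Woodbury form whose $B_j^{-1}$ factors cancel the $B_j s_j$'s so that the quantity reduces to $s_j^T(B_j^{-1}+G^{-1})^{-1}s_j/(s_j^TB_js_j)$; once it is in that form, the monotone-operator inequalities and the trace bound on $\lambda_{\max}(B_j)$ are routine. I would also check the boundary case $j=0$ (where $Y_{-1}$ is empty and $B_0=\gamma_k^{-1}I$, so the bound reduces to $\theta_{\min}/(\gamma_k^{-1}+\theta_{\min})$) and confirm that the hypothesis $y_\ell^Ts_\ell\ge\delta$ is used only for $\ell=0,\dots,j-1\le k-1$, which is exactly the range assumed.
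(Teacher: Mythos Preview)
Your proof is correct. Both your argument and the paper's reach the same intermediate bound
\[
1-u_i^TC_i^{-1}u_i \ \ge\ \frac{\theta_{\min}}{\theta_{\min}+\lambda_{\max}(B_j)},
\]
and then bound $\lambda_{\max}(B_j)$ identically by $\gamma_k^{-1}+\sum_{\ell=0}^{j-1}\|y_\ell\|_2^2/(y_\ell^Ts_\ell)\le \gamma_k^{-1}+\|Y_{j-1}\|_F^2/\delta$. The route to that intermediate bound, however, differs.

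The paper substitutes $q=B_j^{1/2}s_j$ to write $u_i^TC_i^{-1}u_i$ as the Rayleigh quotient $q^T(I+B_j^{-1/2}GB_j^{-1/2})^{-1}q/(q^Tq)$, bounds it by $\lambda_{\max}$ of that matrix, and then uses $\lambda_{\min}(B_j^{-1/2}GB_j^{-1/2})\ge\theta_{\min}/\lambda_{\max}(B_j)$ via a second Rayleigh-quotient argument. You instead apply the Woodbury-type identity $(B_j+G)^{-1}=B_j^{-1}-B_j^{-1}(B_j^{-1}+G^{-1})^{-1}B_j^{-1}$ so that the $B_j$ factors cancel and the quantity becomes $s_j^T(B_j^{-1}+G^{-1})^{-1}s_j/(s_j^TB_js_j)$; positivity is then immediate, and the quantitative bound follows from operator monotonicity of the inverse together with the eigenvalue comparison $(B_j^{-1}+\theta_{\min}^{-1}I)^{-1}\succeq\frac{\theta_{\min}}{\theta_{\min}+\lambda_{\max}(B_j)}B_j$. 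Your version has the advantage that positivity drops out for free from the clean intermediate expression, while the paper's version avoids invoking $G^{-1}$ and operator monotonicity. For the $\lambda_{\max}(B_j)$ bound you use $\lambda_{\max}\le\trace$ on the PSD sum $\sum_\ell y_\ell y_\ell^T/(y_\ell^Ts_\ell)$, whereas the paper applies Cauchy--Schwarz to $(y_\ell^Tz)^2$ term by term; these are equivalent here. Your explicit verification that $C_{2j}=B_j+G$ via telescoping is also a nice addition, as the paper uses this identity without spelling it out.
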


\begin{proof} Using the definition of $u_i$ given in \eqref{eqn:u}, for $i = 2j$, 
\begin{eqnarray} \label{eqn:lemma1}
	u_{i}^TC_i^{-1}u_{i} &=&
	\frac{1}{s_j^TB_js_j}\bigg ( s_j^T B_j (B_j + G)^{-1} B_j s_j \bigg ). 
\end{eqnarray}
Letting $q = B_j^{1/2}s_j$ in (\ref{eqn:lemma1}), we obtain 
\begin{eqnarray}
	u_{i}^TC_i^{-1}u_{i}  &=& 
	\frac{q^T(I + B_j^{-1/2}GB_j^{-1/2})^{-1}q}{q^Tq} \nonumber \\
	&\le& \lambda_{\max} \bigg ( (I + B_j^{-1/2}GB_j^{-1/2})^{-1} \bigg ) \nonumber \\
	&=& \frac{1}{\lambda_{\min} \bigg ( I + B_j^{-1/2}GB_j^{-1/2} \bigg ) }\nonumber\\
	&\le& \frac{1}{1 + \lambda_{\min}\bigg ( B_j^{-1/2}GB_j^{-1/2} \bigg  )}
\label{eqn:aca}
\end{eqnarray}
since
$\lambda_{\min} ( I + B_j^{-1/2}GB_j^{-1/2}  )\ge\lambda_{\min}(I)+\lambda_{\min}( B_j^{-1/2}GB_j^{-1/2})
$ by~\cite[Theorem 8.1.5]{GolV96}.  Note that
\begin{eqnarray}
	\lambda_{\min} \bigg ( B_j^{-\frac12}GB_j^{-\frac12} \bigg )  
	= \underset{x \ne 0}{\text{ min }} \frac{x^T B_j^{-\frac12}GB_j^{-\frac12}x}{x^Tx} 
	= \underset{y \ne 0}{\text{ min }} \frac{y^TGy}{y^T B_jy} 
	\ge  
	\frac{\theta_{\min}}{\lambda_{\max}(B_j)}. \label{eqn:brilliant}
\end{eqnarray}
Now we find an upper bound for $\lambda_{\max}(B_{j})$.  Suppose $z \ne 0$,  then
\begin{eqnarray*}
	z^TB_{j}z 
	&=& 
	z^T \! \left ( B_{j-1} - \frac{1}{s_{j-1}^T B_{j-1} s_{j-1}}B_{j-1} s_{j-1} s_{j-1}^T B_{j-1}  
		+ \frac{1}{y_{j-1}^Ts_{j-1}}y_{j-1} y_{j-1}^T  \right ) \! z \\
	&=& 
	z^TB_{j-1}z -  \frac{\left(z^TB_{j-1}s_{j-1}\right)^2}{s_{j-1}^TB_{j-1}s_{j-1}}
		+ \frac{(y_{j-1}^Tz)^2}{ y_{j-1}^Ts_{j-1}}  \\
	&\le&
	z^TB_{j-1} z + \frac{\| y_{j-1} \|_2^2 \| z \|_2^2}{ y_{j-1}^Ts_{j-1}},
\end{eqnarray*}
using the Cauchy-Schwartz inequality.
Applying this recursively and setting $B_0 = \gamma_k^{-1} I$, we obtain
\begin{equation}\label{eqn:lambdaBj}
	\lambda_{\max}(B_{j})
	\ = \
	\underset{z \ne 0}{\max} \ \frac{z^TB_j z}{z^Tz}
	\ \le \ 
	\gamma_k^{-1} + \sum_{\ell = 0}^{j-1} \frac{ \| y_{\ell} \|_2^2}{y_{\ell}^T s_{\ell}}.
\end{equation}
Then, (\ref{eqn:aca}) together with (\ref{eqn:brilliant}) and (\ref{eqn:lambdaBj}) yields
$$
	1 - u_{i}^TC_i^{-1}u_{i}  \ \ge \ 
	\frac{\theta_{\min}}{\lambda_{\max}(B_j) + \theta_{\min}}
        \ \ge \
        \frac{\theta_{\min}}{\gamma_k^{-1} + \sum_{\ell = 0}^{j-1} \frac{ \| y_{\ell} \|_2^2}{y_{\ell}^T s_{\ell}} + \theta_{\min}}.
$$
Finally, since $y_l^Ts_l\ge \delta >0$ for $l\in\{0,\ldots,k\}$, we obtain
$$
	1 - u_{i}^TC_i^{-1}u_{i} 
	\ \ge \
	\frac{\theta_{\min}}{\gamma_k^{-1} + \| Y_{j-1} \|_F^2/\delta + \theta_{\min}}
	\ > \ 0,
$$
as desired.
\end{proof}

\bigskip

\noindent The following theorem shows that computing
$C_{i+1}^{-1}r$ is stable.

\begin{theorem} \label{thm:SLBFGSstability}   Let $Y_j = [ \ y_0 \ \ y_1 \ \cdots \ y_j \ ] \in \mathbb{R}^{n \times
    (j+1)}$.  Suppose that $y_{\ell}^T s_{\ell} \ge \delta > 0$ for $\ell=0,\ldots,k-1$, $\|Y_{j-1}\|_F^2\leq \eta$, and $\gamma_k\theta_{\min}>\epsilon$ for some $\epsilon > 0$.  Provided
  solves with $G+\alpha I$ are stable for $\alpha>0$,
then Algorithm 3.1 for computing $C_{i}^{-1}r$ for $i = 1, 2, \cdots, 2k$
is stable.
\end{theorem}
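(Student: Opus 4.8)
The plan is to reduce the claimed stability of Algorithm 3.1 to the stability of its elementary steps. Apart from the solves with $G+\gamma_k^{-1}I$ --- which are stable by hypothesis --- every operation in the algorithm is an inner product, a scalar multiplication, or a vector update $v\leftarrow v+\beta w$, all of which are backward stable in floating-point arithmetic provided the scalar multipliers that occur have moderate magnitude. The only scalars that could be large or ill-determined are the $\tau_i$ in \eqref{eqn:taui}, and, as observed in the text, the only real danger is the subtraction in the denominator $1-u_i^Tp_i$ for even $i$. So the proof reduces to (a) showing that this denominator is bounded away from zero by a constant depending only on the fixed data, and (b) checking that the auxiliary vectors $u_i$ and $p_i$ that feed the recursion stay uniformly bounded.

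For part (a): when $i$ is odd the denominator of $\tau_i$ equals $1+u_i^TC_i^{-1}u_i\ge 1$ since $C_i$ is positive definite, so no cancellation occurs. When $i=2j$, Lemma~\ref{lemma:tracebound} gives
\[
  1-u_i^TC_i^{-1}u_i \ \ge \ \frac{\theta_{\min}}{\gamma_k^{-1}+\|Y_{j-1}\|_F^2/\delta+\theta_{\min}} .
\]
The three hypotheses are exactly what is needed to make the right-hand side a fixed positive number: $\gamma_k\theta_{\min}>\epsilon$ gives $\gamma_k^{-1}<\theta_{\min}/\epsilon$, and $\|Y_{j-1}\|_F^2\le\eta$, so
\[
  1-u_i^TC_i^{-1}u_i \ > \ \frac{\theta_{\min}}{\theta_{\min}/\epsilon+\eta/\delta+\theta_{\min}} \ =: \ \rho \ > \ 0 ,
\]
with $\rho$ depending only on $\theta_{\min},\delta,\eta,\epsilon$. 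Hence the denominator of $\tau_i$ is bounded away from zero for every $i$, the subtraction in \eqref{eqn:taui} causes no catastrophic cancellation, and $0<\tau_i<1$ for odd $i$ while $1\le\tau_i<1/\rho$ for even $i$.

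For part (b): the vectors $u$ used in Algorithm 3.1 are $u_{2j}=B_js_j/\sqrt{s_j^TB_js_j}$ and $u_{2j+1}=y_j/\sqrt{y_j^Ts_j}$, so $\|u_{2j}\|_2^2\le\lambda_{\max}(B_j)\le\gamma_k^{-1}+\sum_{\ell<j}\|y_\ell\|_2^2/(y_\ell^Ts_\ell)<\theta_{\min}/\epsilon+\eta/\delta$ by \eqref{eqn:lambdaBj} and the hypotheses, and $\|u_{2j+1}\|_2^2\le\eta/\delta$; thus $\|u_i\|_2$ is uniformly bounded. Moreover each $C_i$ is positive definite, with $\lambda_{\min}(C_{2j})=\lambda_{\min}(B_j+G)\ge\theta_{\min}$, and, for the rank-one downdate $C_{2j+1}=C_{2j}-u_{2j}u_{2j}^T$, interlacing together with the determinant identity $\det C_{2j+1}=\det C_{2j}\,(1-u_{2j}^TC_{2j}^{-1}u_{2j})$ gives $\lambda_{\min}(C_{2j+1})\ge\lambda_{\min}(C_{2j})\,(1-u_{2j}^TC_{2j}^{-1}u_{2j})\ge\theta_{\min}\rho$; hence $\|p_i\|_2=\|C_i^{-1}u_i\|_2$ is uniformly bounded as well. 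With $|\tau_i|$, $\|u_i\|_2$, and $\|p_i\|_2$ all controlled by constants independent of $k$, and with at most $2k\le 2M$ iterations ($M$ a small fixed constant), a standard forward rounding-error analysis of the two inner recursions --- the one building $p_j$ from $(G+\gamma_k^{-1}I)^{-1}u$ and the scaled corrections, and the one updating $x$ --- shows that the computed $C_i^{-1}r$ differs from the exact value by a quantity of order the unit roundoff times these constants, which is the desired stability statement. I expect part (a) to be the substantive point: once Lemma~\ref{lemma:tracebound} is combined with the three hypotheses, the bounds in (b) and the error-accumulation bookkeeping are routine.
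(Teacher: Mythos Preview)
Your argument is correct, and its substantive content---using Lemma~\ref{lemma:tracebound} together with the hypotheses to bound the denominator $1-u_i^TC_i^{-1}u_i$ uniformly away from zero---matches the paper's. The paper organizes the same idea as an explicit induction on $i$: it treats the base case $i=1$ separately via the direct computation $u_0^TC_0^{-1}u_0\le 1/(1+\gamma_k\theta_{\min})<1/(1+\epsilon)$, and then in the induction step invokes Lemma~\ref{lemma:tracebound} to obtain \eqref{eqn:1trace}; your treatment folds both cases into the single uniform constant $\rho$. Your part~(b)---the uniform bounds on $\|u_i\|_2$ and $\|p_i\|_2$ (including the interlacing/determinant argument for $\lambda_{\min}(C_{2j+1})$) and the forward error bookkeeping---goes beyond what the paper actually proves: the paper effectively identifies ``stability'' with the absence of catastrophic cancellation in $\tau_i$ and stops once the denominator is shown to be bounded away from zero. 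So your route is the same in spirit but strictly more detailed.
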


\begin{proof}
  The proof is by induction on $i$.  Consider the base case $i=1$:
By (\ref{eqn:Ciinv})
we have
\begin{equation*}
  C_1^{-1}r
  \ = \ C_0^{-1}r - \frac{1}{1 - u_0^T C_0^{-1} u_0} C_0^{-1}u_0u_0^TC_0^{-1}r .
\end{equation*}
Substituting in for $u_0$ using \eqref{eqn:u} together with $B_0=\gamma_k^{-1} I$ yields
$$
	u_0^T  C_0^{-1}u_0 
	= 
	\frac{ s_0^T B_0(G+\gamma_k^{-1}I)^{-1}B_0 s_0}{s_0^TB_0s_0}
	 = 
 	\frac{s_0^T (G+\gamma_k^{-1}I)^{-1}s_0}{\gamma_k \ \! s_0^Ts_0}
	 \le 
	\frac{1}{1 + \gamma_k \theta_{\min}} < 
	\frac{1}{1 + \epsilon}
<1.
$$
Therefore, $1 - u_0^TC_0^{-1}u_0 >0$, and $C_1^{-1}r$ is computed in a stable manner.

For the induction step, we assume that computing $C_{i}^{-1}r$ is stable.  Then we need to show computing
$C_{i+1}^{-1}r$ is stable.  Since
\begin{equation}\label{eqn:induction}
	C_{i+1}^{-1} r = C_{i}^{-1}r - 
	\frac{1}{1 + (-1)^{i+1}u_iC_i^{-1}u_i} C_{i}^{-1}U_{i}^{\phantom{1}}
	C_{i}^{-1}r,
\end{equation}
we only need to show that the second term  can be computed in a stable manner.
Using Lemma \ref{lemma:tracebound}, 
\begin{equation}\label{eqn:1trace}
	1 + (-1)^{i+1}u_iC_i^{-1}u_i
	\ \ge \
	\min \left \{ 
		1, 
	\displaystyle{\frac{\theta_{\min}}{\gamma_k^{-1} + \eta/\delta + \theta_{\min}}}
	\right \},
\end{equation}
which implies that $1 + (-1)^{i+1}u_iC_i^{-1}u_i$ is bounded
away from zero, and thus, the denominator in (\ref{eqn:induction}) is bounded
away from zero.  By assumption computing $C_i^{-1}r$ is stable; therefore
computing $C_{i+1}^{-1}r$ using (\ref{eqn:1trace}) is stable.
This completes the proof.
\end{proof}

The bound on $\|Y_{j-1}\|^2_F$ can be enforced during run time.  That is,
when $\| Y_{j-1} \|_F>\sqrt{\eta}$, where $\eta$ is user-defined, the current
pairs $\{(s_j,y_j)\}$ can be discarded and the \LBFGS{} method can be restarted.

\section{Numerical experiments}

We demonstrate the effectiveness of the proposed recursion formula by
solving large shifted \LBFGS{} linear systems.  To generate the
first set of large shifted \LBFGS{} systems, we consider
linear systems arising in the context of optimization, and in particular,
in trust-region methods that seek to satisfy the first equation in
(\ref{eqnUC-TR-optimality}).  The second set of tests consider
very large shifted \LBFGS{} systems generated at random.

\subsection{Solving large shifted L-BFGS systems in optimization}
The first series of tests
arise from the \CUTEr{} test collection (see 
\cite{BonCGT95,GouOT03}).  
The test set was constructed using the \CUTEr{} interactive \texttt{select}
tool, which allows the identification of groups
of problems with certain characteristics.  In our case, the \texttt{select}
tool was used to identify the twice-continuously differentiable
unconstrained problems for which the number of variables can be varied.
This process selected 67 problems:
\texttt{arwhead}, 
\texttt{bdqrtic}, 
\texttt{broydn7d}, 
\texttt{brybnd}, 
\texttt{chainwoo}, 
\texttt{cosine}, 
\texttt{cragglvy}, 
\texttt{curly10}, 
\texttt{curly20}, 
\texttt{curly30}, 
\texttt{dixmaana}, 
\texttt{dixmaanb}, 
\texttt{dixmaanc}, 
\texttt{dixmaand}, 
\texttt{dixmaane}, 
\texttt{dixmaanf}, 
\texttt{dixmaang}, 
\texttt{dixmaanh}, 
\texttt{dixmaani}, 
\texttt{dixmaanj}, 
\texttt{dixmaank}, 
\texttt{dixmaanl}, 
\texttt{dixon3dq}, 
\texttt{dqdrtic}, 
\texttt{dqrtic}, 
\texttt{edensch}, 
\texttt{eg2}, 
\texttt{engval1}, 
\texttt{extrosnb}, 
\texttt{fletchcr}, 
\texttt{fletcbv2}, 
\texttt{fminsrf2}, 
\texttt{fminsurf}, 
\texttt{freuroth}, 
\texttt{genhumps}, 
\texttt{genrose}, 
\texttt{liarwhd}, 
\texttt{morebv}, 
\texttt{ncb20}, 
\texttt{ncb20b}, 
\texttt{noncvxu2}, 
\texttt{noncvxun}, 
\texttt{nondia}, 
\texttt{nondquar}, 
\texttt{penalty1}, 
\texttt{penalty2}, 
\texttt{powellsg}, 
\texttt{power}, 
\texttt{quartc}, 
\texttt{sbrybnd}, 
\texttt{schmvett}, 
\texttt{scosine}, 
\texttt{scurly10}, 
\texttt{scurly20}, 
\texttt{scurly30}, 
\texttt{sinquad}, 
\texttt{sparsine}, 
\texttt{sparsqur}, 
\texttt{spmsrtls}, 
\texttt{srosenbr}, 
\texttt{testquad}, 
\texttt{tointgss}, 
\texttt{tquartic}, 
\texttt{tridia}, 
\texttt{vardim}, 
\texttt{vareigvl} and 
\texttt{woods}.  
The dimensions were selected so that $n\ge 1000$, with a default of
$n=1000$ unless otherwise recommended in the \CUTEr{} documentation. 

Using the default initial starting points for these problems, we ran the
\LBFGS{} method to generate five limited-memory pairs, i.e., $(s_i,y_i)$,
$i=0,\dots, 4$.  Having obtained five \LBFGS{} limited-memory updates without
convergence to a minimizer, the
following shifted \LBFGS{} system was solved:

\begin{equation}\label{eqn-trsp}
(B_5+\sigma I ) s=-g_5,
\end{equation}
where $\sigma$ is a positive scalar and $g_5\defined \nabla f(x_5)$.

In practice $\sigma$ can take on any value positive value at optimality
(see (\ref{eqnUC-TR-optimality})).  Because of this, the value for $\sigma$
was randomly chosen between $(0,1)$. The
following problems did not satisfy the requirement
$\gamma_k\theta_{min}>\epsilon$ (see Theorem \ref{thm:SLBFGSstability})
with $\epsilon=1.0\times 10^{-4}$, and thus, were removed from the test
set: \texttt{arwhead}, 
\texttt{bdqrtic}, 
\texttt{curly10}, 
\texttt{curly20}, 
\texttt{curly30}, 
\texttt{dqrtic}, 
\texttt{liarwhd}, 
\texttt{nondia}, 
\texttt{penalty1}, 
\texttt{penalty2}, 
\texttt{power}, 
\texttt{quartc}, 
\texttt{sbrybnd}, 
\texttt{scosine}, 
\texttt{scurly10}, 
\texttt{scurly20}, 
\texttt{scurly30}, 
\texttt{sinquad}, 
\texttt{sparsine}, 
\texttt{testquad}, 
\texttt{tridia}, and 
\texttt{vardim}. 
And, finally, on the following problems \LBFGS{} converged to a minimizer
before generating five pairs of \LBFGS{} updates and were removed from the
test set: \texttt{eg2} and 
\texttt{tointgss}. 
This left a total of 43 problems in the test set.

The proposed method was implemented in \MATLAB{} and tested against the
\MATLAB{} \texttt{pcg} implementation of conjugate-gradients
with and without preconditioners.
The value for $\sigma$
was randomly chosen using the \MATLAB{} \texttt{rand} command.  
(Previous work has shown the \MATLAB{} ``backslash'' command is less
computationally efficient on large problems than the proposed method~\cite{ErwayMarcia12}
and are not repeated here.)
For the test using iterative methods
(i.e., \texttt{pcg}), convergence was obtained when the residual of the
linear system was less than or equal to $\sqrt{\epsilon}$, where
$\epsilon$ is machine precision in \MATLAB{} (i.e., \texttt{eps}).

\medskip

\noindent \textbf{Results.}  In the numerical experiments, \MATLAB{}
reported that \texttt{pcg} solved each linear system to the desired
accuracy without so-called ``stalling''.  In all linear solves, the
proposed recursion, \texttt{cg}, and \texttt{pcg} obtained (approximately)
the same solution.

\begin{table}
\centering
\label{table-cuter-time}
\begin{tabular}{|lc|ccc|}
  \hline
  \multicolumn{2}{|c|}
  {Problem}  &
  \multicolumn{3}{c|}
  {Time (Iterations)} \\ 
  name & $n$
  &  Recursion &\CG & \PCG (diag) \\
  \hline
 \texttt{BROYDN7D  } &  5000 &  \texttt{0.003449} & \texttt{0.006317  (6)} & \texttt{0.018552 (13) } \\

 \texttt{BRYBND    } &  5000 &  \texttt{0.003318} & \texttt{0.006485  (6)} & \texttt{0.013635 (10) } \\

 \texttt{CHAINWOO  } &  4000 &  \texttt{0.002942} & \texttt{0.006164  (6)} & \texttt{0.007573 \ (6) } \\

 \texttt{COSINE    } &  10000 &  \texttt{0.024464} & \texttt{0.010488  (6)} & \texttt{0.017255 \  (8) } \\

 \texttt{CRAGGLVY  } &  5000 &  \texttt{0.003245} & \texttt{0.066393  (4)} & \texttt{0.006829 \ (4) } \\
 \texttt{DIXMAANA  } &  3000 &  \texttt{0.002895} & \texttt{0.003395  (3)} & \texttt{0.003864 \ (3) } \\

 \texttt{DIXMAANB  } &  3000 &  \texttt{0.002555} & \texttt{0.003886  (4)} & \texttt{0.009697 \ (4) } \\

 \texttt{DIXMAANC  } &  3000 &  \texttt{0.004444} & \texttt{0.004554  (5)} & \texttt{0.005802 \ (4) } \\

 \texttt{DIXMAAND  } &  3000 &  \texttt{0.002622} & \texttt{0.004663  (5)} & \texttt{0.005995 \ (5) } \\

 \texttt{DIXMAANE  } &  3000 &  \texttt{0.002607} & \texttt{0.005161  (6)} & \texttt{0.006653 \ (6) } \\

 \texttt{DIXMAANF  } &  3000 &  \texttt{0.002610} & \texttt{0.005488  (6)} & \texttt{0.010982 \ (6) } \\

 \texttt{DIXMAANG  } &  3000 &  \texttt{0.003355} & \texttt{0.005165  (6)} & \texttt{0.007249 \ (7) } \\

 \texttt{DIXMAANH  } &  3000 &  \texttt{0.002452} & \texttt{0.005151  (6)} & \texttt{0.006591 \ (6) } \\

 \texttt{DIXMAANI  } &  3000 &  \texttt{0.002517} & \texttt{0.005291  (6)} & \texttt{0.011050 \ (6) } \\

 \texttt{DIXMAANJ  } &  3000 &  \texttt{0.002669} & \texttt{0.005991  (6)} & \texttt{0.006656  \ (6) } \\

 \texttt{DIXMAANK  } &  3000 &  \texttt{0.002594} & \texttt{0.005113  (6)} & \texttt{0.008267 \ (7) } \\

 \texttt{DIXMAANL  } &  3000 &  \texttt{0.002653} & \texttt{0.005007  (6)} & \texttt{0.006493 \ (6) } \\

 \texttt{DIXON3DQ  } &  10000 &  \texttt{0.005480} & \texttt{0.008864  (6)} & \texttt{0.013588 \ (7) } \\

 \texttt{DQDRTIC   } &  5000 &  \texttt{0.005144} & \texttt{0.006466  (5)} & \texttt{0.013477 \ (6) } \\

 \texttt{EDENSCH   } &  2000 &  \texttt{0.001783} & \texttt{0.003708  (6)} & \texttt{0.006259 \ (8) } \\

 \texttt{ENGVAL1   } &  5000 &  \texttt{0.003114} & \texttt{0.004979  (4)} & \texttt{0.007133 \ (5) } \\

 \texttt{EXTROSNB  } &  1000 &  \texttt{0.002146} & \texttt{0.002988  (6)} & \texttt{0.004553 \ (7) } \\

 \texttt{FLETCHCR  } &  1000 &  \texttt{0.001279} & \texttt{0.002611  (5)} & \texttt{0.003262 \ (5) } \\

 \texttt{FLETCBV2  } &  5000 &  \texttt{0.003049} & \texttt{0.006631  (6)} & \texttt{0.015269 (10) } \\

 \texttt{FMINSRF2  } &  5625 &  \texttt{0.003771} & \texttt{0.006565  (6)} & \texttt{0.009300 \ (7) } \\

 \texttt{FMINSURF  } &  1024 &  \texttt{0.001299} & \texttt{0.003183  (6)} & \texttt{0.004348 \ (7) } \\

 \texttt{FREUROTH  } &  5000 &  \texttt{0.005582} & \texttt{0.005960  (5)} & \texttt{0.007530 \ (5) } \\

 \texttt{GENHUMPS  } &  5000 &  \texttt{0.003390} & \texttt{0.006558  (6)} & \texttt{0.012958 (10) } \\

 \texttt{GENROSE   } &  500 &  \texttt{0.001113} & \texttt{0.002651  (6)} & \texttt{0.005155 (10) } \\
 \texttt{MOREBV    } &  5000 &  \texttt{0.003102} & \texttt{0.008279  (6)} & \texttt{0.016035 (10) } \\

 \texttt{NCB20     } &  1010 &  \texttt{0.001286} & \texttt{0.003044  (6)} & \texttt{0.005761 (10) } \\

 \texttt{NCB20B    } &  2000 &  \texttt{0.001868} & \texttt{0.003749  (6)} & \texttt{0.009726 (11) } \\

 \texttt{NONCVXU2  } &  5000 &  \texttt{0.003079} & \texttt{0.006782  (6)} & \texttt{0.009971 \ (7) } \\

 \texttt{NONCVXUN  } &  5000 &  \texttt{0.003135} & \texttt{0.006700  (6)} & \texttt{0.009887 \ (7) } \\
 \texttt{NONDQUAR  } &  5000 &  \texttt{0.003031} & \texttt{0.006796  (5)} & \texttt{0.008765 \ (6) } \\
 \texttt{POWELLSG  } &  5000 &  \texttt{0.003778} & \texttt{0.006176  (4)} & \texttt{0.027472 \ (4) } \\

 \texttt{SCHMVETT  } &  5000 &  \texttt{0.002947} & \texttt{0.045579  (6)} & \texttt{0.011899 \ (9) } \\
 \texttt{SPARSQUR  } &  10000 &  \texttt{0.005037} & \texttt{0.010826  (6)} & \texttt{0.023172 \ (9) } \\

 \texttt{SPMSRTLS  } &  4999 &  \texttt{0.003280} & \texttt{0.006679  (6)} & \texttt{0.010540 \ (8) } \\
 \texttt{SROSENBR  } &  5000 &  \texttt{0.003425} & \texttt{0.003185  (2)} & \texttt{0.003729 \ (2) } \\
 \texttt{TQUARTIC  } &  5000 &  \texttt{0.003158} & \texttt{0.003266  (2)} & \texttt{0.003805 \ (2) } \\
 \texttt{VAREIGVL  } &  1000 &  \texttt{0.001440} & \texttt{0.003064  (6)} & \texttt{0.004877 \ (8) } \\
 \texttt{WOODS     } &  4000 &  \texttt{0.008936} & \texttt{0.008285  (4)} & \texttt{0.006503 \ (4) } \\
\hline
\end{tabular}

\bigskip

\caption{Time reported to obtain the specified accuracy.
The number of iterations performed by the iterative methods is reported in parenthesis.}
\end{table}


Table 1 reports the time taken to obtain the desired accuracy.  For the
iterative methods, the total number of iterations is reported in
parenthesis after the time.  The time required by the iterative methods
often is greater than that required by the recursion formula. In fact, in
all but three experiment (\texttt{cosine}, \texttt{srosenbr}, and
\texttt{woods}), the recursion method was faster than the iterative
methods \texttt{cg} and \texttt{pcg}.

It is worth noting that L-BFGS{} matrices (i.e., $B_5$) tend to be dense
and may not be diagonally dominant. Moreover, $\sigma\in (0,1)$ may add
relatively little weight to the diagonal of $B_5$.  The results in
Table 2 suggest that a diagonal preconditioner was not
a good choice to precondition the linear system (\ref{eqn-trsp}).
In fact, using a diagonal preconditioner
often led to additional iterations with higher overall time
requirements---in part, due to the additional solve with the preconditioner.
In tests not reported here, preconditioners using the tridiagonal
of $B_5$ also failed to be a good preconditioner.


\subsection{Solving large shifted L-BFGS systems}
The second set of numerical experiments consider large
shifted \LBFGS{} systems where the shift is a tridiagonal matrix.
For these experiments we generate random symmetric tridiagonal shifts; in
particular, we solve systems of the form
\begin{equation}\label{eqn-tridiag}
(B+G)x=y,\end{equation}
as in (\ref{eqn-main}) where $G$ is a symmetric tridiagonal matrix.
Problems such as these occur in the context of interior-point methods, e.g,
see (\ref{eqn-doubly}) when $A$ is a banded upper-triangular matrix
with bandwidth two.


The elements of $G$ were chosen as follows:
\begin{equation*}
G_{i,j} = \left\{
\begin{array}{ll}
 2+\sigma+g_{ii}, \,\,\,\, & \text{if } i=j \\
 g_{ij}, & \text{if } | i-j | \leq 1,\,\, i\neq j\\
 0 ,     & \text{if } | i-j | > 1\\
\end{array}
\right.
\end{equation*}
where $\sigma$ is a positive scalar and $g_{ii}$ is randomly chosen from a
uniform distribution on $(0,1)$ and $g_{i,j}$ is randomly chosen from a
uniform distribution on $(-1,0)$.  For these experiments we chose
$\sigma=0.1$.  As in Section 5.1, the \MATLAB{} implementation of the
proposed method was tested against the \MATLAB{} \texttt{pcg}
implementation of conjugate-gradients with and without a diagonal
preconditioner.  For the test using iterative methods (i.e., \texttt{cg}
and \texttt{pcg}), convergence was obtained when the residual of the linear
system was less than or equal to $\sqrt{\epsilon}$, where $\epsilon$ is
machine precision in \MATLAB{} (i.e., \texttt{eps}).


\begin{table}
\centering
\label{table-large-resid}
\begin{tabular}{|r|ccc|}
  \hline
  {Problem}  &
  \multicolumn{3}{c|}
  {Relative Residual} \\ 
  $n$ \ \ \ \ 
  &  Recursion & \CG & \PCG (diag) \\
  \hline
  \texttt{10000}  	& \texttt{6.14e-16} & \texttt{1.13e-08} & \texttt{1.32e-08} \\ 
  \texttt{20000} 	& \texttt{6.65e-16} & \texttt{1.80e-09} & \texttt{1.25e-09} 
  \\
  \texttt{50000} 	&  \texttt{6.68e-15} & \texttt{8.25e-09} & \texttt{3.54e-09}
\\
  \texttt{100000}	&  \texttt{8.05e-16} & \texttt{4.95e-09} & \texttt{1.19e-08} 
\\
  \texttt{200000}	&  \texttt{4.71e-15} & \texttt{1.78e-10} & \texttt{7.30e-11} 
\\
  \texttt{500000}	&  \texttt{3.85e-15} & \texttt{1.41e-08} & \texttt{1.21e-08} 
\\
  \texttt{1000000} &  \texttt{3.55e-15} & \texttt{9.25e-09} & \texttt{6.61e-09} 
\\
  \texttt{2000000} &  \texttt{1.60e-14} & \texttt{1.78e-09} & \texttt{1.64e-09} 
\\
\hline
\end{tabular}

\bigskip

\caption{Relative residuals of solutions obtained using the
proposed recursion, CG, and PCG with a diagonal preconditioner.}
\end{table}

\medskip

\begin{table}
\centering
\label{table-large-time}
\begin{tabular}{|r|ccc|}
  \hline
  {Problem}  &
  \multicolumn{3}{c|}
  {Time (Iterations)} \\ 
  $n$ \ \ \ \
  &  Recursion & \CG & \PCG (diag) \\
  \hline
  \texttt{10000}  	& \texttt{0.044961} & \texttt{\ 0.030926 (14)} & \texttt{\ 0.037311 (14)} \\ 
  \texttt{20000} 	& \texttt{0.040573} & \texttt{\ 0.065083 (15)} & \texttt{\ 0.084371 (15)} \\
  \texttt{50000} 	&  \texttt{0.112462} & \texttt{\ 0.215749 (18)} & \texttt{\ 0.289687 (21)} \\
  \texttt{100000}	&  \texttt{0.290477} & \texttt{\ 0.536735 (14)} & \texttt{\ 0.522558 (13)} \\
  \texttt{200000}	&  \texttt{0.587508} & \texttt{\ 1.406732 (14)} & \texttt{\ 1.459770 (14)} \\
  \texttt{500000}	&  \texttt{1.476140} & \texttt{\ 3.313296 (13)} & \texttt{\ 4.007395 (14)} \\
  \texttt{1000000} &  \texttt{3.013683} & \texttt{\ 6.457205 (12)} & \texttt{\ 7.091486 (12)} \\
  \texttt{2000000} &  \texttt{7.437099} & \texttt{12.160872 (12)} & \texttt{13.519277 (12)} \\
\hline
\end{tabular}

\bigskip

\caption{Computational times and number of iterations obtained using the
proposed recursion, CG, and PCG with a diagonal preconditioner.}
\end{table}

\medskip
\noindent \textbf{Results.}  The recursion method, \CG{}, and \PCG{}
implemented with a diagonal preconditioner were used to solve
(\ref{eqn-tridiag}) for varying problem sizes ($1 \times 10^4 \le n \le
2\times 10^6$).  Tables 2 and 3 report results of the randomly generated
test problems; they are representative of what we have seen from many runs
of these methods.  Table 2 shows the relative residuals of the 
shifted \LBFGS{} systems with tridiagonal shifts using the three methods.
Table 3 shows the computational time for each solver.  For the iterative
methods, the total number of iterations is reported in parenthesis after the time.

All three methods were able to compute accurate solutions.  For larger
matrices, the recursion formula appears more computationally efficient than
\CG{} and \PCG.  In fact, in all runs but $n=10000$, the recursion formula
was faster than \CG{} and \PCG.  We note that these results are consistent
with our previous results for diagonal matrices $G$ (see
\cite{ErwayMarcia12, ErwayMarciaWCE12}).  In the case of $n=10000$, the
recursion formula was able to compute a more accurate solution in roughly
the same amount of time as \CG{} and \PCG.  As in Section 5.1, there are
instances where \CG{} performed better without diagonal preconditioning.
We believe that the results for \PCG{} would improve with a better
preconditioner.

\section{Conclusion}

In this paper, we proposed a direct method for solving shifted \LBFGS{}
systems that arise in unconstrained and constrained optimization.  The
recursion formula is not only able to handle very large problems ($n =
\mathcal{O}(10^6)$) for which other direct methods such as Gaussian
elimination fail, it is also very fast, being very competitive with 
conjugate gradient methods.  It is memory efficient since matrices are not
explicitly stored.  Most importantly, it is provably stable.  

The proposed recursion method can easily be applied to other types of
shifts, $G$, in \eqref{eqn-tridiag} provided systems of the form $(G +
\gamma_k^{-1}I)$ can easily solved.  For instance, if $G$ is
circulant, then solving with $(G + \gamma_k^{-1}I)$ can be efficiently
performed using fast Fourier transforms.  Such problems arise in
signal processing (see, e.g.,
\cite{waheed,MarciaWH_CSOptical,wotaoToeplitz}).  Future work
includes extensions to other quasi-Newton methods such as the
symmetric rank-1 and DFP updates as well as non-positive-definite
shifts.

\bibliographystyle{abbrv}
\bibliography{L-BFGS}

\begin{thebibliography}{10}

\bibitem{ByrNS94}
R.~H. Byrd, J.~Nocedal, and R.~B. Schnabel.
\newblock Representations of quasi-{N}ewton matrices and their use in
  limited-memory methods.
\newblock {\em Math. Program.}, 63:129--156, 1994.

\bibitem{DenM77}
J.~E. {Dennis Jr.} and J.~J. Mor{\'e}.
\newblock Quasi-{Newton} methods, motivation and theory.
\newblock {\em SIAM Review}, 19:46--89, 1977.

\bibitem{ErwayMarcia12}
J.~Erway and R.~F. Marcia.
\newblock Limited-memory bfgs systems with diagonal updates.
\newblock {\em Linear Algebra and its Applications}, 437(1):333--344, 2012.

\bibitem{ErwayMarciaWCE12}
J.~Erway and R.~F. Marcia.
\newblock Solving limited-memory bfgs systems with generalized diagonal
  updates.
\newblock In {\em Proceedings of the World Congress on Engineering 2012},
  volume~I, pages 1--5, London, UK, 2012. Newswood Limited.

\bibitem{ErwGG09}
J.~B. Erway, P.~E. Gill, and J.~D. Griffin.
\newblock Iterative methods for finding a trust-region step.
\newblock {\em SIAM J. Optim.}, 20(2):1110--1131, 2009.

\bibitem{ForG98}
A.~Forsgren and P.~E. Gill.
\newblock Primal-dual interior methods for nonconvex nonlinear programming.
\newblock {\em SIAM J. Optim.}, 8:1132--1152, 1998.

\bibitem{ForGG07}
A.~Forsgren, P.~E. Gill, and J.~D. Griffin.
\newblock Iterative solution of augmented systems arising in interior methods.
\newblock {\em SIAM J. Optim.}, 18:666--690, 2007.

\bibitem{ForGW02}
A.~Forsgren, P.~E. Gill, and M.~H. Wright.
\newblock Interior methods for nonlinear optimization.
\newblock {\em SIAM Rev.}, 44:525--597, 2002.

\bibitem{Ger04}
E.~M. Gertz.
\newblock A quasi-{N}ewton trust-region method.
\newblock {\em Math. Program.}, 100(3, Ser. A):447--470, 2004.

\bibitem{GilR10}
P.~E. Gill and D.~P. Robinson.
\newblock A primal-dual augmented {Lagrangian}.
\newblock {\em Computational Optimization and Applications}, pages 1--25, 2010.

\bibitem{GolV96}
G.~H. Golub and C.~F. {Van Loan}.
\newblock {\em Matrix Computations}.
\newblock The Johns Hopkins University Press, Baltimore, Maryland, third
  edition, 1996.

\bibitem{Higham2002}
N.~J. Higham.
\newblock {\em Accuracy and Stability of Numerical Algorithms}.
\newblock SIAM, Philadelphia, PA, second edition, 2002.

\bibitem{Kauf99}
L.~Kaufman.
\newblock Reduced storage, quasi-{N}ewton trust region approaches to function
  optimization.
\newblock {\em SIAM J. Optim.}, 10(1):56--69, 1999.

\bibitem{LiuN89}
D.~C. Liu and J.~Nocedal.
\newblock On the limited memory {BFGS} method for large scale optimization.
\newblock {\em Math. Program.}, 45:503--528, 1989.

\bibitem{MorS83}
J.~J. Mor\'{e} and D.~C. Sorensen.
\newblock Computing a trust region step.
\newblock {\em SIAM J. Sci. and Statist. Comput.}, 4:553--572, 1983.

\bibitem{Noc80}
J.~Nocedal.
\newblock Updating quasi-{Newton} matrices with limited storage.
\newblock {\em Math. Comput.}, 35:773--782, 1980.

\bibitem{NocW06}
J.~Nocedal and S.~J. Wright.
\newblock {\em Numerical Optimization}.
\newblock Springer-Verlag, New York, second edition, 2006.

\bibitem{Pow70d}
M.~J.~D. Powell.
\newblock A fortran subroutine for solving systems of nonlinear algebraic
  equations.
\newblock In P.~Rabinowitz, editor, {\em Numerical Methods for Nonlinear
  Algebraic Equations}, pages 115--161. Gordon and Breach, 1970.

\bibitem{Pow70c}
M.~J.~D. Powell.
\newblock A hybrid method for nonlinear equations.
\newblock In P.~Rabinowitz, editor, {\em Numerical Methods for Nonlinear
  Algebraic Equations}, pages 87--114. Gordon and Breach, 1970.

\bibitem{Stewart1974}
G.~W. Stewart.
\newblock Modifying pivot elements in {Gaussian} elimination.
\newblock {\em Math. Comput.}, 28(126):537--542, 1974.

\bibitem{Yip1986}
E.~L. Yip.
\newblock A note on the stability of solving a rank-{$p$} modification of a
  linear system by the {Sherman--Morrison--Woodbury} formula.
\newblock {\em SIAM J. Sci. Stat. Comput.}, 7(3):507--513, 1986.

\end{thebibliography}

\label{lastpage}

\end{document}